\documentclass{svjour3}
\smartqed  
\usepackage{graphicx,amsmath,amsfonts,amssymb,enumerate,mathrsfs,mathtools}

\newcommand{\N}{\mathbb{N}}                   
\newcommand{\R}{\mathbb{R}}                   
\newcommand{\A}{\mathscr{A}_a}                
\newcommand{\AR}{\mathscr{A{\!}R}}            
\newcommand{\Zar}{\mathrm{Zar}}
\newcommand{\Na}{\mathcal{N}}                 
\newcommand{\Reg}{\mathrm{Reg}}
\newcommand{\Sing}{\mathrm{Sing}}
\newcommand{\I}{\mathcal{I}}
\newcommand{\V}{\mathcal{V}}
\newcommand{\rad}{\mathrm{rad}}

\journalname{Math. Ann.}

\begin{document}

\title{A proof of Kurdyka's conjecture on arc-analytic functions
 \thanks{J. Adamus's research was partially supported by the Natural Sciences and Engineering Research Council of Canada.}}
\titlerunning{Kurdyka's conjecture on arc-analytic functions}
\author{Janusz Adamus \and Hadi Seyedinejad}
\institute{J. Adamus \at Department of Mathematics, University of Western Ontario, London, ON, Canada N6A 5B7\\
 \email{jadamus@uwo.ca}
  \and H. Seyedinejad \at Department of Mathematics, University of Western Ontario, London, ON, Canada N6A 5B7\\
	 \email{sseyedin@uwo.ca}}

\date{Received: / Accepted: }
\maketitle

\begin{abstract}
We prove a conjecture of Kurdyka stating that every arc-symmetric semialgebraic set is precisely the zero locus of an arc-analytic semialgebraic function. This implies, in particular, that arc-symmetric semialgebraic sets are in one-to-one correspondence with radical ideals of the ring of arc-analytic semialgebraic functions.
\keywords{Arc-analytic function \and Semialgebraic geometry \and Arc-symmetric set}
\subclass{14P10 \and 14P20 \and 14P99}
\end{abstract}


\section{Introduction}
\label{sec:intro}

A set $X$ in $\R^n$ is called \emph{semialgebraic} if it can be written as a finite union of sets of the form $\{x\in\R^n:p(x)=0,q_1(x) >0,\dots,q_r(x)>0\}$, where $r\in\N$ and $p,q_1,\dots,q_r\in\R[x_1,\dots,x_n]$. Given $X\subset\R^n$, a \emph{semialgebraic function} $f:X\to\R$ is one whose graph is a semialgebraic subset of $\R^{n+1}$. A function $f:X\to\R$ is called \emph{arc-analytic} if it is analytic along every arc, that is, if $f\circ\gamma$ is analytic for every real-analytic $\gamma:(-1,1) \to X$. 

Let $X$ be a semialgebraic subset of $\R^n$. We say that $X$ is \emph{arc-symmetric} if, for every analytic arc $\gamma:(-1,1)\to\R^n$ with $\gamma((-1,0))\subset X$, we have $\gamma((-1,1))\subset X$. Our main result is the following affirmative answer to a conjecture of Kurdyka {\cite[Conj.\,6.3]{Ku}}:

\begin{theorem}
\label{thm:main}
Let $X$ be an arc-symmetric semialgebraic set in $\R^n$. There exists an arc-analytic semialgebraic function $f:\R^n\to\R$ such that $X=f^{-1}(0)$. 
\end{theorem}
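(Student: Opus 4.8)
The plan is to induct on $d=\dim X$, reducing first to an irreducible $X$, and then to manufacture $f$ by transporting an explicit function from a resolution of singularities of the Zariski closure $\overline{X}^{\Zar}$ down to $\R^n$.

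\emph{Reductions.} Every arc-symmetric semialgebraic set is closed in the Euclidean topology: a boundary point is, by the curve selection lemma, the end of a real-analytic half-arc lying in $X$, whose analytic continuation is then forced into $X$ by arc-symmetry. By Kurdyka's work the arc-symmetric semialgebraic sets are the closed sets of a Noetherian topology, so $X=X_1\cup\dots\cup X_m$ with each $X_i$ irreducible arc-symmetric; since $\A$ is a ring, if $X_i=f_i^{-1}(0)$ with $f_i\in\A$ then $f:=f_1\cdots f_m\in\A$ satisfies $f^{-1}(0)=X$. So I may assume $X$ irreducible. The case $d=0$ (then $X$ is a point, and $f$ a sum of squares of affine functions) starts the induction; let $d\ge 1$ and assume the theorem in dimensions $<d$.

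\emph{Structure and reduction to the Zariski closure.} Put $V:=\overline{X}^{\Zar}$, which I may take pure $d$-dimensional. The decisive input I would draw from Kurdyka's theory of arc-symmetric sets is that, for $X$ irreducible, $X\cap\Reg V$ is both open and closed in the nonsingular locus $\Reg V$ — so it is a union $W$ of connected components of $\Reg V$ — and $X=\overline W$; indeed, were $X$ to have a frontier inside $\Reg V$, one could run an analytic arc along $V$ from inside $X$ across that frontier, contradicting arc-symmetry. Hence $S:=X\cap\Sing V$ is arc-symmetric of dimension $<d$, so by the inductive hypothesis there is $h\in\A$ with $h^{-1}(0)=S$; and the problem is reduced to producing $f\in\A$ vanishing exactly on $\overline W$, the closure of a union of connected components of the smooth locus of the real-algebraic set $V$.

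\emph{Resolution and the candidate.} By Hironaka's resolution there is a finite composition of blowings-up with smooth centres, all mapping into $\Sing V$, $\sigma\colon M\to\R^n$, with $M$ nonsingular and the total transform of $V$ in normal-crossings position; after further blowings-up one separates the components making up $W$ from the remaining ones along their common frontier in $\Sing V$, so that the strict transform of $W$ becomes a union of connected components of the smooth strict transform $V'\subset M$ of $V$ — a closed Nash submanifold, hence the zero set of a nonnegative Nash function $g_1$ on $M$. Setting $g:=g_1\cdot(h\circ\sigma)$, one checks $g^{-1}(0)=\sigma^{-1}(\overline W)=\sigma^{-1}(X)$, and $g$ is arc-analytic and semialgebraic on $M$. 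Since $\sigma$ is a composition of blowings-up, analytic arcs in $\R^n$ lift to analytic arcs in $M$; so any semialgebraic $f\colon\R^n\to\R$ with $f\circ\sigma=g$ is arc-analytic, and then $f^{-1}(0)=\sigma(\sigma^{-1}(X))=X$ by surjectivity of $\sigma$, closing the induction. Equivalently, one hopes to exhibit $f$ outright as a quotient $P^2/Q^2$ with $P$ defining $V$ and $Q$ a polynomial chosen so that $\sigma^*(P^2/Q^2)$ becomes regular and equals, up to a nonvanishing factor, the square of a defining equation of the strict transform of $X$.

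\emph{The main obstacle.} The delicate point — which I expect to be the technical heart of the paper — is that $g$ (equivalently $P^2/Q^2$) must be constant along the fibres of $\sigma$, so that the descended $f$ exists. Over $y\in X$ this is free, since $\sigma^{-1}(y)\subset g^{-1}(0)$; over $y\notin\Sing V$ the fibre is a point; but over a point $y\in\Sing V\setminus X$ the fibre is positive-dimensional and $g_1$ need not be constant on it, nor need $\sigma^{*}P$ and $\sigma^{*}Q$ acquire matching exceptional multiplicities there without extra care. Disposing of this is where I expect the real work to lie: one must choose the resolution and the auxiliary data (the polynomial $Q$, or correction factors multiplied into $g$) using the inductive control on the complementary arc-symmetric set $\overline{\Reg V\setminus W}$ — which has dimension $<d$ and contains $\Sing V\setminus X$ — so that the resulting function is genuinely fibrewise constant while still having zero locus precisely $\sigma^{-1}(X)$.
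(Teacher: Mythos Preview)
Your framework matches the paper's: induction on $\dim X$, reduction to $\AR$-irreducible $X$, embedded resolution $\pi:\widetilde R\to\R^n$ of $V=\overline{X}^{\Zar}$, and the inductively obtained $h\in\A(\R^n)$ with $h^{-1}(0)=X\cap\Sing V$. You have also correctly located the crux---the descent from the resolution---and you are honest that you do not have it. But the mechanism the paper uses is not the one you are reaching for, and your sketch (arranging fibre-constancy, or a quotient $P^2/Q^2$ with matched exceptional multiplicities) does not lead there.

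The paper does \emph{not} try to make the upstairs function constant on the fibres of the resolution. Instead: let $E$ be the connected component of the strict transform $\widetilde X$ with $\pi(E)=\overline{\Reg_k(X)}$, set $D=\pi^{-1}(\Sing V)$ and $Z=E\cap D$, and perform a \emph{second} blowing-up $\sigma:\widehat R\to\widetilde R$ with centre $Z$; since $E$ and $D$ are normal crossings, their strict transforms $\widehat E,\widehat D$ become disjoint. Efroymson's extension theorem then yields a Nash $g$ with $g|_{\widehat E}=0$, $g|_{\widehat D}=1$, and a Nash $v$ with $v^{-1}(0)=\widehat E\cup\widehat D$. The specific choice
\[
\widehat f \;=\; \bigl(g\cdot(h\circ\pi\circ\sigma)\bigr)^2+v^2
\]
is the whole point: it forces $\widehat f\equiv (h\circ\pi\circ\sigma)^2$ on $\widehat D$, i.e.\ on the exceptional locus the function is already a pullback from $\R^n$. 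One then sets $\widetilde f=(\widehat f\circ\sigma^{-1})\cdot(h\circ\pi)$ off $Z$ (and $0$ on $Z$), and finally $f=\widetilde f\circ\pi^{-1}$ off $\Sing V$ and $f=h^3$ on $\Sing V$. Arc-analyticity of $f$ is checked not via fibre-constancy but by lifting an arbitrary analytic arc $\gamma$ to $\widetilde\gamma$ and verifying $f\circ\gamma=\widetilde f\circ\widetilde\gamma$ pointwise in three cases (according to whether $\gamma(t)$ lies outside $\Sing V$, in $\Sing V\cap\pi(E)$, or in $\Sing V\setminus\pi(E)$). The zero set comes out as $(\overline{\Reg_k(X)}\setminus\Sing V)\cup(X\cap\Sing V)$, an $\AR$-closed set between $\Reg_k(X)$ and $X$, hence equal to $X$ by irreducibility.

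So your use of $h$ is too timid: you multiply it in once to kill fibres over $X\cap\Sing V$, but the paper weaves $h$ into the very definition of $\widehat f$ so that over \emph{all} of $\Sing V$ the pushed-down function is $h^3$, which descends for free. One side remark: your claim $X=\overline W$ (Euclidean closure) need not hold; only $\overline{\Reg_k(X)}^{\AR}=X$ is used, and that suffices at the last step.
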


The elegant theory of arc-symmetric semialgebraic sets was developed by Kurdyka \cite{Ku}. It is based on a fundamental observation (\cite[Thm.\,1.4]{Ku}) that the arc-symmetric semialgebraic sets are precisely the closed sets of a certain noetherian topology on $\R^n$. (A topology is called \emph{noetherian} when every descending sequence of its closed sets is stationary.) Following \cite{Ku}, we will call it the \emph{$\AR$ topology}, and the arc-symmetric semialgebraic sets will henceforth be called \emph{$\AR$-closed sets}.

Noetherianity of the $\AR$ topology allows one to make sense of the notions of irreducibility and components of a semialgebraic set much like in the algebraic case (see Section~\ref{sec:prelim} for details). The class of $\AR$-closed sets includes, in particular, the algebraic sets as well as the Nash analytic sets (in the sense of \cite{BCR}). The $\AR$ topology is strictly finer than the Zariski topology on $\R^n$ (see, e.g., \cite[Ex.\,1.2]{Ku}). Moreover, it follows from the semialgebraic Curve Selection Lemma that $\AR$-closed sets are closed in the Euclidean topology in $\R^n$ (see \cite[Rem.\,1.3]{Ku}).

\begin{remark}
\label{rem:comp}
It is interesting to compare the $\AR$ topology with other noetherian refinements of Zariski topology, in which closed sets are defined as the zero loci of functions more general than polynomials, such as Nash functions or continuous rational functions. Continuous rational (a.k.a. \!\emph{regulous} over $\R^n$) functions have recently attracted attention of numerous authors (see, e.g., \cite{FHMM}, \cite{KK}, \cite{KN}, and \cite{Kuch}).  The refinement arising from Nash functions was studied in \cite{FG}. It turns out that the $\AR$ topology is strictly finer than both Nash and regulous topologies, which incidentally are incomparable. This topic is studied in detail in a subsequent paper by the second author.
\end{remark}

Given an $\AR$-closed set $X$ in $\R^n$, we denote by $\A(X)$ the ring of arc-analytic semialgebraic functions on $X$. The elements of $\A(X)$ play the role of `regular functions' in $\AR$ geometry. Indeed, it is not difficult to see (\cite[Thm.\,5.1]{Ku}) that the graph as well as the zero locus of every arc-analytic semialgebraic mapping $f:X\to\R^m$ are $\AR$-closed as well. On the other hand, up until now it was not known whether every $\AR$-closed set may be realized as the zero locus of an arc-analytic function. Our Theorem~\ref{thm:main} fills this gap in the theory. It shows that $\AR$ topology is in fact the one defined by arc-analytic semialgebraic functions, which is not at all apparent from its definition.
\medskip

In the next section, we recall basic notions and tools used in this article. Theorem~\ref{thm:main} is proved in Section~\ref{sec:proof}. 
In Section~\ref{sec:duality}, we prove an $\AR$ version of the duality between closed sets and radical ideals, which is an easy consequence of Theorem~\ref{thm:main}. The last section contains some initial results concerning extension of arc-analytic functions.
We conjecture that $\A(X)\simeq\A(\R^n)/\I(X)$ for every $\AR$-closed set $X$ in $\R^n$.


\section{Preliminaries}
\label{sec:prelim}

First, we shall recall several properties of $\AR$-closed sets that will be used throughout the paper. For details and proofs we refer the reader to \cite{Ku}.

An $\AR$-closed set $X$ is called \emph{$\AR$-irreducible} if it cannot be written as a union of two proper $\AR$-closed subsets. It follows from noetherianity of the $\AR$ topology (\cite[Prop.\,2.2]{Ku}) that every $\AR$-closed set admits a unique decomposition $X=X_1\cup\dots\cup X_r$ into $\AR$-irreducible sets satisfying $X_i\not\subset\bigcup_{j\neq i}X_j$ for each $i=1,\dots,r$. The sets $X_1,\dots,X_r$ are called the \emph{$\AR$-components} of $X$.

For a semialgebraic set $E$ in $\R^n$, let $\overline{E}^{\Zar}$ denote the Zariski closure of $E$, that is, the smallest real-algebraic subset of $\R^n$ containing $E$. Similarly, let $\overline{E}^{\AR}$ denote the $\AR$-closure of $E$ in $\R^n$. Consider the following three kinds of dimension of $E$:
\begin{itemize}
\item the geometric dimension $\dim_{\mathrm{g}}\!E$, defined as the maximum dimension of a real-analytic submanifold of (an open subset of) $\R^n$ contained in $E$,
\item the algebraic dimension $\dim_a\!E$, defined as $\dim\overline{E}^{\Zar}$,
\item the $\AR$ topological (or Krull) dimension $\dim_\mathrm{K}\!E$, defined as the maximum length $l$ of a chain $X_0\subsetneq X_1\subsetneq\dots\subsetneq X_l\subset\overline{E}^{\AR}$, where $X_0,\dots,X_l$ are $\AR$-irreducible.
\end{itemize}
It is well known that $\dim_{\mathrm{g}}\!E=\dim_a\!E$ (see, e.g., \cite[Sec.\,2.8]{BCR}). By \cite[Prop.\,2.11]{Ku}, we also have $\dim_a\!E=\dim_\mathrm{K}\!E$.
We shall denote this common dimension simply as $\dim{E}$. By convention, $\dim\varnothing=-1$.
\medskip

An essential tool in our proofs is the blowing-up of $\R^n$ at a Nash subset. Recall that a subset $Z$ of a semialgebraic open $U\subset\R^n$ is called \emph{Nash} if it is the zero locus of a Nash function $f:U \to \R$. A function $f:U\to\R$ is called a \emph{Nash function} if it is an analytic algebraic function on $U$, that is, a real-analytic function such that there exists a non-zero polynomial $P\in\R[x,t]$ with $P(x,f(x))=0$, for every $x \in U$. We denote the ring of all Nash functions on $U$  by $\Na(U)$. We refer the reader to \cite[Ch.\,8]{BCR} for details on Nash sets and mappings.

Let $Z$ be a Nash subset of $\R^n$. Consider the ideal $\I(Z)$ in $\Na(\R^n)$ of all Nash functions on $\R^n$ vanishing on $Z$. By noetherianity of $\Na(\R^n)$ (see, e.g., \cite[Thm.\,8.7.18]{BCR}), there are $f_1,\dots,f_r \in \Na(\R^n)$ such that $\I(Z)=(f_1,\dots,f_r)$. Set
\[
\widetilde{R} \coloneqq \{(x,[u_1,\dots,u_r])\in\R^n\times\mathbb{P}^{r-1} : \ u_if_j(x)=u_jf_i(x) \mathrm{\ for\ all\ }i,j=1,\dots,r\}\,.
\]
The restriction $\sigma:\widetilde{R}\to\R^n$ to $\widetilde{R}$ of the canonical projection $\R^n\times\mathbb{P}^{r-1}\to\R^n$ is the \emph{blowing-up of $\R^n$ at (the centre) $Z$}. One can verify that $\widetilde{R}$ is independent of the choice of generators $f_1,\dots,f_r$ of $\I(Z)$.
Since a real projective space is an affine algebraic set (see, e.g., \cite[Thm.\,3.4.4]{BCR}), one can  assume that $\widetilde{R}$ is a Nash subset of $\R^N$ for some $N\in\N$. If $X$ is a Nash subset of $\R^n$, then the smallest Nash subset $\widetilde{X}$ of $\widetilde{R}$ containing $\sigma^{-1}(X\setminus Z)$ is called the \emph{strict transform of $X$ (by $\sigma$)}. In this case, if $Z\subset X$, then we may also call $\widetilde{X}$ the \emph{blowing-up of $X$ at $Z$}.
\medskip

For a semialgebraic set $E$ and a natural number $d$, we denote by $\Reg_d(E)$ the locus of those points $x\in E$ at which $E_x$ is a germ of a $d$-dimensional analytic manifold. If $\dim{E}=k$, we set $\Sing(E)\coloneqq E\setminus\Reg_k(E)$. Then, $\Sing(E)$ is semialgebraic and $\dim\Sing(E)<\dim{E}$.

Finally, recall that every algebraic set $X$ in $\R^n$ admits an \emph{embedded desingularization}. That is, there exists a proper mapping $\pi:\widetilde{R}\to\R^n$ which is the composition of a finite sequence of blowings-up with smooth algebraic centres, such that $\pi$ is an isomorphism outside the preimage of the singular locus $\Sing(X)$ of $X$, the strict transform $\widetilde{X}$ of $X$ is smooth, and $\widetilde{X}$ and $\pi^{-1}(\Sing(X))$ simultaneously have only normal crossings. (The latter means that every point of $\widetilde{R}$ admits a (local analytic) coordinate neighbourhood in which $\widetilde{X}$ is a coordinate subspace and each hypersurface $H$ of $\pi^{-1}(\Sing(X))$ is a coordinate hypersurface.)
For details on resolution of singularities we refer the reader to \cite{BM2} or \cite{Hi}.


\section{Proof of the main theorem}
\label{sec:proof}

In \cite[Thm.\,6.2]{Ku}, Kurdyka showed that, given an $\AR$-closed set $X$ in $\R^n$, there exists $f\in\A(\R^n)$ such that $X\subset f^{-1}(0)$ and $\dim(f^{-1}(0)\setminus X)<\dim{X}$. Our proof of Theorem~\ref{thm:main} follows the general idea of the above in that we lift the problem to a desingularization of the Zariski closure of $X$. We later lift it once more by a single blowing-up, to control the excess of zeros of $f$.

\subsubsection*{Proof of Theorem~\ref{thm:main}}
Let $X$ be an $\AR$-closed set in $\R^n$. We argue by induction on dimension of $X$.

If $\dim X\leq0$, then $X$ is just a finite set and hence the zero locus of a polynomial function. Suppose then that $\dim X=k>0$, and every $\AR$-closed set of dimension smaller than $k$ is the zero locus of an arc-analytic semialgebraic function on $\R^n$.
We may also assume that $X$ is $\AR$-irreducible.

Let $Y$ be the singular locus of $\overline{X}^{\Zar}$. Let $\pi:\widetilde{R}\to\R^n$ be an embedded desingularization of $\overline{X}^{\Zar}$\!, and let $\widetilde{X}$ be the strict transform of $\overline{X}^{\Zar}$. Then, $\pi$ is an isomorphism outside the preimage of $Y$.
Since $X \cap Y$ is an $\AR$-closed set of dimension smaller than $k$, the inductive hypothesis implies that there exists $h \in\A(\R^n)$ such that $X \cap Y = h^{-1}(0)$. 

By \cite[Thm.\,2.6]{Ku}, there exists a connected component $E$ of $\widetilde{X}$, such that the Euclidean closure $\overline{\Reg_k(X)}$ is equal to $\pi(E)$. Further, let $D \coloneqq \pi^{-1}(Y)$ and $Z \coloneqq E \cap D$.  Let $\sigma:\widehat{R}\to\widetilde{R}$ be the blowing-up of $\widetilde{R}$ at $Z$. As discussed in Section~\ref{sec:prelim}, we can assume that $\widehat{R}\subset\R^N$ for some $N\in\N$. Let $\widehat{E}$ and $\widehat{D}$ be the strict transforms of $E$ and $D$ by $\sigma$, respectively. Since $E$ and $D$ have only normal crossings, $\widehat{E}$ and $\widehat{D}$ are disjoint subsets of $\widehat{R}$. 

By the semialgebraic Tietze-Urysohn Theorem (\cite[Prop.\,2.6.9]{BCR}), disjoint closed semialgebraic subsets can be separated by open semialgebraic sets. Let then $U_1$ and $U_2$ be open semialgebraic subsets of $\R^N$ such that $\widehat{E}\subset U_1$, $\widehat{D}\subset U_2$, and $U_1\cap U_2=\varnothing$. Define a Nash function $q:U_1 \cup U_2 \to \R$ as
\[
q(z)\coloneqq\begin{cases} 0, &z\in U_1\\ 1, &z \in U_2\,.\end{cases}
\]
By the Efroymson extension theorem (see \cite{E} or \cite[Thm.\,8.9.12]{BCR}), the function $q$ admits a Nash extension to the whole $\R^N$; that is, there exists $g \in \Na(\R^N)$ such that $g|_{\widehat{E}\cup\widehat{D}}=q|_{\widehat{E}\cup\widehat{D}}$. Moreover, the set $\widehat{E}\cup\widehat{D}$ being closed Nash in $\R^N$, there exists $v \in \Na(\R^N)$ such that $\widehat{E}\cup\widehat{D}=v^{-1}(0)$. Now, define
\[
\widehat{f} \coloneqq \big(g \cdot (h \circ \pi \circ \sigma)\big)^2 + v^2\,.
\]
Observe that $\widehat{f}$ is an arc-analytic function defined on $\R^N$ (hence, in particular, on $\widehat{R}$), $\widehat{f}=(h \circ \pi \circ \sigma)^2$ on $\widehat{D}$, $\widehat{f}=0$ on $\widehat{E}$, and $\widehat{f}$ never vanishes outside of $\widehat{E}\cup\widehat{D}$.

Next, we push down $\widehat{f}$ by $\sigma$ in order to get an arc-analytic function on $\widetilde{R}$. More precisely, we define $\widetilde{f}:\widetilde{R}\to\R$ as
\[
\widetilde{f}(y)\coloneqq\begin{cases}((\widehat{f}\circ\sigma^{-1})\cdot(h\circ\pi))(y), &y\notin Z\\ 0, &y\in Z\,.\end{cases}
\]
To see that $\widetilde{f}$ is arc-analytic, let $\widetilde{\gamma}:(0,1)\to\widetilde{R}$ be an analytic arc and let $\widehat\gamma:(0,1)\to\widehat{R}$ be its lifting by $\sigma$. Then, $\sigma\circ\widehat\gamma=\widetilde\gamma$. We claim that 
\begin{equation}
\label{eq:Gogamma}
\widetilde{f}\circ\widetilde{\gamma} = (\widehat{f}\circ\widehat\gamma) \cdot (h \circ \pi \circ \widetilde{\gamma})\,,
\end{equation}
which implies that $\widetilde{f}\circ \widetilde{\gamma}$ is analytic. Indeed, if $\widetilde{\gamma}(t) \notin Z$, then \eqref{eq:Gogamma} holds because $(\widehat{f}\circ\sigma^{-1}\circ\widetilde\gamma)(t)=(\widehat{f}\circ\sigma^{-1}\circ\sigma\circ\widehat\gamma)(t)=(\widehat{f}\circ\widehat\gamma)(t)$. If, in turn, $\widetilde{\gamma}(t) \in Z$, then $(h \circ \pi \circ \widetilde{\gamma})(t)=0$, by definition of $h$, and hence both sides of \eqref{eq:Gogamma} are equal to zero. 

Now, we push down $\widetilde{f}$ by $\pi$ in order to get an arc analytic function on $\R^n$. More precisely, we define $f:\R^n \to \R$ as
\[
f(x) \coloneqq\begin{cases}(\widetilde{f}\circ\pi^{-1})(x), &x\notin Y\\ h^3(x), &x \in Y\,.\end{cases}
\]
To see that $f$ is arc-analytic, let $\gamma:(0,1)\to\R^n$ be an analytic arc. Let $\widetilde\gamma:(0,1)\to\widetilde{R}$ be the lifting of $\gamma$ by $\pi$, and let $\widehat\gamma:(0,1)\to\widehat{R}$ be the lifting of $\widetilde\gamma$ by $\sigma$. Then, $\pi\circ \widetilde\gamma=\gamma$, and $\sigma\circ\widehat\gamma=\widetilde\gamma$. We claim that 
\begin{equation}
\label{eq:f1ogamma}
f\circ\gamma = \widetilde{f}\circ\widetilde\gamma\,,
\end{equation}
which implies that $f \circ \gamma$ is analytic. Indeed, if $\gamma(t) \not\in Y$, then \eqref{eq:f1ogamma} holds because 
$(\widetilde{f}\circ\pi^{-1}\circ\gamma)(t)=(\widetilde{f}\circ\pi^{-1}\circ\pi\circ\widetilde\gamma)(t)=(\widetilde{f}\circ\widetilde\gamma)(t)$. If, in turn, $\gamma(t) \in Y \cap \pi(E)$, then $h(\gamma(t))=0$ and hence $(f \circ \gamma)(t)=0$. But $\widetilde{\gamma}(t) \in Z$, and hence $(\widetilde{f}\circ\widetilde\gamma)(t)=0$ as well. Finally, if $\gamma(t)\in Y\setminus\pi(E)$, then $\widetilde\gamma(t)\notin Z$ and $\widehat\gamma(t)\in\widehat{D}$; hence, by \eqref{eq:Gogamma}, we have
\begin{multline}
\notag
(\widetilde{f}\circ\widetilde\gamma)(t) = ((\widehat{f}\circ\widehat\gamma)\cdot(h\circ\pi\circ\widetilde\gamma))(t)
= \left(((h\circ\pi\circ\sigma)^2\circ\widehat\gamma)\cdot(h\circ\pi\circ\widetilde\gamma)\right)(t)\\
= \left((h\circ\pi\circ\widetilde\gamma)^2\cdot(h\circ\pi\circ\widetilde\gamma)\right)(t)
= (h\circ\pi\circ\widetilde\gamma)^3(t) = (h\circ\gamma)^3(t) = (f\circ\gamma)(t)\,.
\end{multline}
We shall now calculate the zero locus of $f$.
\begin{align*}
f^{-1}(0) & = \{x \in \R^n\setminus Y : (\widetilde{f}\circ\pi^{-1})(x)=0 \} \cup \{x \in Y : h^3(x)=0 \}\\
& = \pi\left(\{y \in \widetilde{R}\setminus D : \widetilde{f}(y)=0\}\right) \cup (X \cap Y)\\
& = \pi\left(\{y \in \widetilde{R}\setminus D : ((\widehat{f}\circ\sigma^{-1})\cdot(h\circ\pi))(y)=0\}\right) \cup (X \cap Y)\\
& = \pi\left(\{y \in \widetilde{R}\setminus D : (\widehat{f}\circ\sigma^{-1})(y)=0\}\right) \cup (X \cap Y)\\
& = \left((\pi\circ\sigma)(\{z\in\widehat{R}\setminus\sigma^{-1}(D) :  \widehat{f}(z)=0 \})\right) \cup (X \cap Y)\\
& = (\pi\circ\sigma)(\widehat{E}\setminus\sigma^{-1}(D)) \cup (X \cap Y)\\
& = \pi(E\setminus D) \cup (X \cap Y)\\
& = (\overline{\Reg_{k}(X)} \,\setminus Y) \cup (X \cap Y)\,.
\end{align*}
It follows that the $\AR$-closed set $f^{-1}(0)$ is contained in $X$ and contains the $\AR$-closure $\overline{\Reg_k(X)}^{\AR}$\!.
By $\AR$-irreducibility of $X$ however, the set $\Reg_k(X)$ is $\AR$-dense in $X$. We thus get $f^{-1}(0)=X$, which completes the proof.
\qed


\section{Algebro-geometric duality}
\label{sec:duality}

Theorem~\ref{thm:main} allows one to establish a direct dictionary between the $\AR$-closed sets and the radical ideals in the ring of arc-analytic semialgebraic functions, analogous to the case of Zariski topology over an algebraically closed field.

Let $X$ and $Y$ be $\AR$-closed sets in $\R^n$, with $Y\subset X$. We will denote by $\I_X(Y)$ the ideal in $\A(X)$ of the functions that vanish on $Y$. If $X=\R^n$, we will write $\I(Y)$ for $\I_X(Y)$. For an ideal $I$ in $\A(X)$, we will denote by $\V_X(I)$ the set of points $x\in X$ at which all  $f\in I$ vanish. Further, $\rad_X(I)$ will denote the radical of $I$ in $\A(X)$. If $X=\R^n$, we shall write $\V(I)$ for $\V_X(I)$, and $\rad(I)$ for $\rad_X(I)$.

\begin{proposition}
\label{prop:VI}
Let $X$ be an $\AR$-closed subset of $\R^n$.
\begin{itemize}
\item[(i)] If $Y\subset X$ is $\AR$-closed, then $\V_X(\I_X(Y))=Y$. 
\item[(ii)] If $I$ is an ideal in $\A(X)$, then $\I_X(\V_X(I))=\rad_X(I)$.
\end{itemize}
\end{proposition}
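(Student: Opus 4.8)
The plan is to prove the two set-theoretic/ideal-theoretic identities of Proposition~\ref{prop:VI} using Theorem~\ref{thm:main} together with the standard formal manipulations that underlie any Nullstellensatz-type correspondence, adapted to the ring $\A(X)$.

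For part (i), one inclusion is trivial: every $f\in\I_X(Y)$ vanishes on $Y$ by definition, so $Y\subset\V_X(\I_X(Y))$. For the reverse inclusion, I would apply Theorem~\ref{thm:main} to the $\AR$-closed set $Y$ (viewed inside $\R^n$, since $Y\subset X\subset\R^n$ is itself $\AR$-closed) to obtain an arc-analytic semialgebraic function $f:\R^n\to\R$ with $Y=f^{-1}(0)$. Restricting $f$ to $X$ gives an element of $\A(X)$ vanishing precisely on $Y$, hence $f|_X\in\I_X(Y)$ and therefore $\V_X(\I_X(Y))\subset f|_X^{-1}(0)=Y\cap X=Y$. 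The only point requiring a word of care is that the restriction of an arc-analytic semialgebraic function on $\R^n$ to an $\AR$-closed subset $X$ is again arc-analytic semialgebraic on $X$ — this is immediate, since arcs into $X$ are in particular arcs into $\R^n$, and the graph of $f|_X$ is the intersection of the graph of $f$ with the semialgebraic set $X\times\R$.

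For part (ii), again one inclusion is formal and holds in any commutative ring of functions: if $g\in\rad_X(I)$, say $g^m\in I$ for some $m\geq 1$, then $g^m$ vanishes on $\V_X(I)$, hence $g$ vanishes on $\V_X(I)$, so $g\in\I_X(\V_X(I))$; moreover $\I_X(\V_X(I))$ is manifestly a radical ideal (if $g^m$ vanishes on a set then so does $g$), so $\rad_X(I)\subset\I_X(\V_X(I))$. The substantive direction is $\I_X(\V_X(I))\subset\rad_X(I)$. Here is where the argument is less automatic than in the polynomial setting, because $\A(X)$ need not be noetherian and we cannot pass to finitely many generators of $I$ in the naive way; instead I would proceed as follows. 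Put $V\coloneqq\V_X(I)$; this is an $\AR$-closed subset of $X$, since $V=\bigcap_{f\in I}f^{-1}(0)$ is an intersection of $\AR$-closed sets (each $f^{-1}(0)$ is $\AR$-closed by \cite[Thm.\,5.1]{Ku}), and the $\AR$ topology is noetherian so this intersection equals a finite sub-intersection $f_1^{-1}(0)\cap\dots\cap f_s^{-1}(0)$ for some $f_1,\dots,f_s\in I$. Now let $g\in\I_X(V)$; I must show some power of $g$ lies in $I$. Consider the function $\Phi\coloneqq f_1^2+\dots+f_s^2\in\A(X)$: it is arc-analytic semialgebraic, lies in $I$ (as a combination of the $f_i\in I$), and satisfies $\Phi^{-1}(0)=V$. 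Thus $g$ and $\Phi$ are arc-analytic semialgebraic functions on $X$ with $\{\Phi=0\}\subset\{g=0\}$, and the claim reduces to a \L ojasiewicz-type inequality: for arc-analytic semialgebraic $g,\Phi$ on a semialgebraic set with $g^{-1}(0)\supset\Phi^{-1}(0)$, there exist $N\in\N$ and $c\in\A(X)$ (or at least a semialgebraic $c$) with $g^N=c\,\Phi$, which would give $g^N\in(\Phi)\subset I$ and hence $g\in\rad_X(I)$.

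I expect the main obstacle to be precisely this last step — establishing that $g^N=c\,\Phi$ with $c$ again \emph{arc-analytic} semialgebraic (not merely semialgebraic or continuous). The inclusion $\{\Phi=0\}\subset\{g=0\}$ together with semialgebraicity and the semialgebraic \L ojasiewicz inequality yields $|g|^N\leq C|\Phi|$ on compact pieces, hence a semialgebraic bounded quotient $c=g^N/\Phi$ extended by $0$ on $V$; the issue is its analyticity along arcs. For an analytic arc $\gamma$ into $X$, both $g\circ\gamma$ and $\Phi\circ\gamma$ are analytic, $\Phi\circ\gamma$ vanishes (to some finite order, unless identically) wherever $g\circ\gamma$ does, and the order-of-vanishing comparison forces $(g\circ\gamma)^N/(\Phi\circ\gamma)$ to be analytic for $N$ large — but $N$ a priori depends on $\gamma$. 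One resolves this by invoking the uniform bound coming from the semialgebraic \L ojasiewicz exponent (which is a single rational number for the pair $(g,\Phi)$ on $X$), so a single $N$ works along all arcs simultaneously; the resulting $c$ is then arc-analytic, being analytic along every arc, and semialgebraic as a quotient of semialgebraic functions away from a lower-dimensional set plus a removable-singularity argument. This is the one place where more than bookkeeping is needed, and it is the natural crux of the proof.
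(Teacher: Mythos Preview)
Your argument for part~(i) is exactly the paper's, and for part~(ii) your overall strategy---noetherianity of the $\AR$ topology to reduce $\V_X(I)$ to a finite intersection, then the sum of squares $\Phi=f_1^2+\dots+f_s^2$, then a \L ojasiewicz-type divisibility statement---is again precisely what the paper does. The only difference is that the paper does not argue the last step from scratch: it simply invokes \cite[Prop.\,6.5]{Ku}, which is exactly the assertion that for $g,\Phi\in\A(X)$ with $\Phi^{-1}(0)\subset g^{-1}(0)$ one has $g\in\rad_X((\Phi))$. What you call ``the natural crux'' is thus already available in Kurdyka's paper, and your sketch is essentially a rederivation of that proposition.

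One point in your sketch deserves care. Defining $c\coloneqq g^N/\Phi$ off $V$ and $c\coloneqq 0$ on $V$, you need, for every analytic arc $\gamma$ meeting $V$, not only that $(g\circ\gamma)^N/(\Phi\circ\gamma)$ extends analytically across $\gamma^{-1}(V)$ but that this extension \emph{vanishes} there, so that it agrees with your assigned value $0$. The \L ojasiewicz bound $|g|^N\le C|\Phi|$ yields only $N\cdot\mathrm{ord}_{t_0}(g\circ\gamma)\ge\mathrm{ord}_{t_0}(\Phi\circ\gamma)$, which gives analyticity but not vanishing when equality holds. Replacing $N$ by $N+1$ fixes this, since $\mathrm{ord}_{t_0}(g\circ\gamma)\ge 1$ whenever $\gamma(t_0)\in V\subset g^{-1}(0)$; with that adjustment your argument goes through.
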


\begin{proof} (i) By Theorem~\ref{thm:main}, given an $\AR$-closed $Y$ in $\R^n$, there exists $f\in\A(\R^n)$ such that $Y=\V(f)$. Then, the restriction $f|_X$ is in $\I_X(Y)$, and so $\V_X(\I_X(Y))\subset\V_X(f|_X)=Y$. The inclusion $\V_X(\I_X(Y))\supset Y$ is obvious.

(ii) Given an ideal $I$ in $\A(X)$, we have $\V_X(I)=\bigcap_{f\in I}f^{-1}(0)$, hence, by noetherianity of $\AR$ topology, $\V_X(I)=f_1^{-1}(0)\cap\dots\cap f_r^{-1}(0)$, for some $f_1,\dots,f_r\in I$. Setting $g\coloneqq f_1^2+\dots+f_r^2$, we get $\V_X(I)=\V_X(g)$.

Let $f\in\I_X(\V_X(I))$ be arbitrary. Since $g^{-1}(0)\subset f^{-1}(0)$, it follows from \cite[Prop.\,6.5]{Ku} that $f\in\rad_X((g))$, and hence $f\in\rad_X(I)$. This proves that $\I_X(\V_X(I))\subset\rad_X(I)$. The opposite inclusion follows from the fact that $\I_X(\V_X(I))$ is a radical ideal which contains $I$.
\qed
\end{proof}

\begin{corollary}
\label{cor:irred}
Let $X$ be an $\AR$-closed subset of $\R^n$.
\begin{itemize}
\item[(i)] An $\AR$-closet set $Y\subset X$ is irreducible iff its ideal $\I_X(Y)$ is prime.
\item[(ii)] The zero locus $\V_X(\mathfrak{p})$ of a prime ideal $\mathfrak{p}$ in $\A(X)$ is an irreducible $\AR$-closed set.
\end{itemize}
\end{corollary}

\begin{proof}
The implication from left to right in (i) follows simply from the fact that the zero locus of every arc-analytic function is $\AR$-closed. The remaining statements of the corollary, in turn, follow immediately from Proposition~\ref{prop:VI}.
\qed
\end{proof}

Let $X$ be an $\AR$-closed set in $\R^n$. Proposition~\ref{prop:VI} together with equality $\dim_{\mathrm{g}}\!X=\dim_\mathrm{K}\!X$ from Section~\ref{sec:prelim} imply immediately that every ascending sequence of prime ideals in $\A(X)$ is of finite length, and the Krull dimension of $\A(X)$ is precisely $\dim{X}$, which gives a quick alternative proof of \cite[Prop.\,6.10]{Ku}.
Proposition~\ref{prop:VI} can be also used to establish a one-to-one correspondence between irreducible components of $X$ and the minimal primes of $\I(X)$. (Recall from Section~\ref{sec:prelim} that every $\AR$-closed set $X$ has a decomposition into finitely many $\AR$-closed sets, called its $\AR$-irreducible components, none of which can be decomposed into a union of two proper $\AR$-closed subsets.)

\begin{proposition}
\label{prop:min-primes}
Let $X, Y$ be $\AR$-closed sets in $\R^n$, with $Y\subset X$. The decomposition of \,$Y$ into $\AR$-irreducible components is given by \,$Y=\V_X(\mathfrak{p}_1)\cup\dots\cup\V_X(\mathfrak{p}_r)$, where $\mathfrak{p}_1,\dots,\mathfrak{p}_r$ are precisely the minimal prime ideals of the ring $\A(X)$ which contain $\I_X(Y)$.
\end{proposition}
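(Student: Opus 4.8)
The plan is to translate the statement entirely through the dictionary of Proposition~\ref{prop:VI} and Corollary~\ref{cor:irred}, reducing it to a formal fact about the Galois connection between $\AR$-closed subsets of $X$ and radical ideals of $\A(X)$. First I would invoke Section~\ref{sec:prelim} to write the unique irredundant decomposition $Y=Y_1\cup\dots\cup Y_r$ of $Y$ into $\AR$-irreducible components (so $Y_i\not\subset\bigcup_{j\neq i}Y_j$), and set $\mathfrak{p}_i\coloneqq\I_X(Y_i)$. By Corollary~\ref{cor:irred}(i) each $\mathfrak{p}_i$ is prime, and by Proposition~\ref{prop:VI}(i) we have $\V_X(\mathfrak{p}_i)=\V_X(\I_X(Y_i))=Y_i$; hence $Y=\V_X(\mathfrak{p}_1)\cup\dots\cup\V_X(\mathfrak{p}_r)$ is exactly the asserted decomposition. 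It then remains to identify $\{\mathfrak{p}_1,\dots,\mathfrak{p}_r\}$ with the set of minimal primes of $\A(X)$ containing $\I_X(Y)$, which I would do in two inclusions.

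For one direction, since $Y_i\subset Y$ and $\I_X$ is inclusion-reversing, $\I_X(Y)\subset\mathfrak{p}_i$, so each $\mathfrak{p}_i$ contains $\I_X(Y)$. To check minimality, I would take a prime $\mathfrak{q}$ with $\I_X(Y)\subset\mathfrak{q}\subset\mathfrak{p}_i$ and apply $\V_X$: this gives $Y_i=\V_X(\mathfrak{p}_i)\subset\V_X(\mathfrak{q})\subset\V_X(\I_X(Y))=Y$. By Corollary~\ref{cor:irred}(ii), $\V_X(\mathfrak{q})$ is $\AR$-irreducible; being a closed irreducible subset of $Y=\bigcup_j Y_j$, it lies in some $Y_j$, and since it also contains $Y_i$, irredundancy of the decomposition forces $j=i$ and $\V_X(\mathfrak{q})=Y_i=\V_X(\mathfrak{p}_i)$. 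As $\mathfrak{q}$ and $\mathfrak{p}_i$ are radical (being prime), Proposition~\ref{prop:VI}(ii) then yields $\mathfrak{q}=\I_X(\V_X(\mathfrak{q}))=\I_X(\V_X(\mathfrak{p}_i))=\mathfrak{p}_i$.

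Conversely, given any minimal prime $\mathfrak{q}$ of $\A(X)$ over $\I_X(Y)$, I would observe that $\V_X(\mathfrak{q})\subset\V_X(\I_X(Y))=Y$ and that $\V_X(\mathfrak{q})$ is $\AR$-irreducible by Corollary~\ref{cor:irred}(ii), hence contained in some component $Y_i$. Applying $\I_X$ and Proposition~\ref{prop:VI}(ii) (with $\mathfrak{q}$ radical) gives $\I_X(Y)\subset\mathfrak{p}_i=\I_X(Y_i)\subset\I_X(\V_X(\mathfrak{q}))=\mathfrak{q}$; since $\mathfrak{p}_i$ is a prime containing $\I_X(Y)$ and contained in $\mathfrak{q}$, minimality of $\mathfrak{q}$ forces $\mathfrak{q}=\mathfrak{p}_i$. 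This completes the identification, and hence the proof.

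I do not anticipate a genuine obstacle: each step is a mechanical application of the inclusion-reversing bijection from Proposition~\ref{prop:VI} together with the irreducibility correspondence of Corollary~\ref{cor:irred}. The only point requiring real care — and the one I would write out most carefully — is the bookkeeping in the first direction, where one must use the \emph{irredundancy} of the $\AR$-irreducible decomposition (not merely the fact that an irreducible set lies in some component) to pin down the index $j=i$.
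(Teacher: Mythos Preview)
Your proposal is correct and follows essentially the same route as the paper's own proof: define $\mathfrak{p}_i=\I_X(Y_i)$ for the $\AR$-components $Y_i$, use Proposition~\ref{prop:VI} and Corollary~\ref{cor:irred} to see that these are primes over $\I_X(Y)$ with $\V_X(\mathfrak{p}_i)=Y_i$, check minimality via the irredundancy of the decomposition, and for the converse show any prime $\mathfrak{q}\supset\I_X(Y)$ contains some $\mathfrak{p}_i$. The only cosmetic difference is that the paper phrases the converse for an arbitrary prime over $\I_X(Y)$ (obtaining $\mathfrak{q}\supset\mathfrak{p}_i$ directly) rather than invoking minimality of $\mathfrak{q}$, but the content is identical.
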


\begin{proof}
Let $Y_1,\dots,Y_r$ be the $\AR$-irreducible components of $Y$. Then, $Y_i \not\subset Y_j$ for $i\neq j$. Set $\mathfrak{p}_i \coloneqq \I_X(Y_i)$, $i=1,\dots,r$. By Proposition~\ref{prop:VI}(i), we have $Y_i=\V_X(\mathfrak{p}_i)$. It follows that $Y=\V_X(\mathfrak{p}_1) \cup\dots\cup\V_X(\mathfrak{p}_r)$.

By Corollary~\ref{cor:irred}(i), each $\mathfrak{p}_i$ is a prime ideal of $\A(X)$ containing $\I_X(Y)$. Suppose that $\mathfrak{p}$ is a prime ideal of $\A(X)$ such that $\I_X(Y)\subset\mathfrak{p}\subset\mathfrak{p}_i$ for some $i$. Then $Y\supset\V_X(\mathfrak{p})\supset Y_i$. By Corollary~\ref{cor:irred}(ii), $\V_X(\mathfrak{p})$ is $\AR$-irreducible, and hence there exists $j$ such that $\V_X(\mathfrak{p})\subset Y_j$. But then $Y_i\subset Y_j$, and so $i=j$. It follows that $\V_X(\mathfrak{p})=Y_i$, and hence $\mathfrak{p}=\mathfrak{p}_i$, by Proposition~\ref{prop:VI}(ii). This proves that $\mathfrak{p}_i$ is a minimal prime of $\A(X)$ which contains $\I_X(Y)$.

It remains to see that $\mathfrak{p}_1, \dots,\mathfrak{p}_r$ are all such primes. Let $\mathfrak{p}$ be a prime in $\A(X)$ containing $\I_X(Y)$. Then, $\V_X(\mathfrak{p}) \subset Y$. By irreducibility of $\V_X(\mathfrak{p})$, it follows that $\V_X(\mathfrak{p})\subset Y_i$ for some $i$. Hence, by Proposition~\ref{prop:VI} again, $\mathfrak{p}\supset\mathfrak{p}_i$.
\qed
\end{proof}


\section{Extension of arc-analytic functions}
\label{sec:extension}

Despite some close analogies with Zariski topology over an algebraically closed field (as seen in the previous section), there are still some important open questions concerning the relationship between algebra and geometry in the $\AR$ setting. We suspect that the techniques of Section~\ref{sec:proof} can be used to show that every arc-analytic semialgebraic function on an $\AR$-closed set $X$ in $\R^n$ is, in fact, a restriction of an element of $\A(\R^n)$.

\begin{conjecture}
\label{conj:ext}
Let $X\subset\R^n$ be $\AR$-closed. Every arc-analytic semialgebraic function $f:X\to\R$ can be extended to an arc-analytic semialgebraic function on the entire $\R^n$. In other words, $\A(X)\simeq\A(\R^n)/\I(X)$ as $\R$-algebras.
\end{conjecture}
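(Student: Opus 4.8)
\textbf{Proof proposal for Conjecture~\ref{conj:ext}.}
The plan is to adapt the lifting strategy of Section~\ref{sec:proof} to the extension problem, treating $f:X\to\R$ one $\AR$-irreducible component of $X$ at a time and then gluing. First I would reduce to the case where $X$ is $\AR$-irreducible: by Proposition~\ref{prop:min-primes} the components $X_1,\dots,X_r$ of $X$ correspond to the minimal primes of $\I(X)$, and once each $f|_{X_i}$ has been extended to some $g_i\in\A(\R^n)$ it remains to patch the $g_i$ together into a single arc-analytic semialgebraic function agreeing with $f$ on all of $X$. The patching can be done with the usual trick: choosing, via Theorem~\ref{thm:main}, arc-analytic semialgebraic functions $e_i$ with $e_i^{-1}(0)=\bigcup_{j\neq i}X_j$, one forms a partition-of-unity-type combination $\sum_i g_i\cdot w_i$ where $w_i$ is an arc-analytic semialgebraic function equal to $1$ on $X_i$ and vanishing on the other components (the $e_i$ and the $\I(X_i)$-generators supply these $w_i$ after normalization), so this step should be routine modulo checking arc-analyticity of the combination, which follows because arc-analyticity can be tested arc-by-arc.

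With $X$ $\AR$-irreducible of dimension $k$, I would again pass to an embedded desingularization $\pi:\widetilde R\to\R^n$ of $\overline X^{\Zar}$ and single out, via \cite[Thm.\,2.6]{Ku}, the connected component $E$ of the strict transform with $\pi(E)=\overline{\Reg_k(X)}$. On the Nash manifold $E$, the composition $f\circ\pi$ is defined and arc-analytic over the open dense subset $\pi^{-1}(X\setminus Y)$ where $\pi$ is an isomorphism; the first substantive task is to show that $f\circ\pi$ extends arc-analytically (indeed Nash, after a further blowing-up) across $E\cap D$, where $D=\pi^{-1}(Y)$. Here I expect to use the normal-crossings structure of $E$ and $D$: after blowing up $\widetilde R$ along $Z=E\cap D$ as in the proof of Theorem~\ref{thm:main}, the strict transforms $\widehat E$ and $\widehat D$ become disjoint, and on $\widehat E$ one is trying to extend a semialgebraic arc-analytic function defined away from a normal-crossings divisor. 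This is where I would invoke, or prove a local version of, an extension theorem for arc-analytic functions across normal-crossings divisors — the crucial input being that an arc-analytic semialgebraic function on the complement of such a divisor, bounded along arcs, extends arc-analytically; one then globalizes on $\widehat E$ and pushes the result forward by $\sigma$ and $\pi$, exactly as the auxiliary functions $\widehat f,\widetilde f,f$ were transported in Section~\ref{sec:proof}, using the diagram $\sigma\circ\widehat\gamma=\widetilde\gamma$, $\pi\circ\widetilde\gamma=\gamma$ to verify arc-analyticity of the descended function.

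Once $f$ is extended to an arc-analytic semialgebraic function $\widehat g$ on a Nash neighbourhood of $\widehat E$ in $\widehat R$, I would use the Efroymson extension theorem (\cite{E}, \cite[Thm.\,8.9.12]{BCR}) together with semialgebraic Tietze--Urysohn (\cite[Prop.\,2.6.9]{BCR}) — just as in the proof of Theorem~\ref{thm:main} — to produce a genuine global Nash, hence arc-analytic semialgebraic, function on $\R^N\supset\widehat R$ that still restricts to $f\circ\pi\circ\sigma$ on $\widehat E$; pushing down by $\sigma$ and then $\pi$ yields the desired $g\in\A(\R^n)$ with $g|_X=f$. The isomorphism $\A(X)\simeq\A(\R^n)/\I(X)$ is then immediate: restriction $\A(\R^n)\to\A(X)$ is surjective by the above, its kernel is $\I(X)$ by definition, and the map is well-defined because restrictions of arc-analytic semialgebraic functions are arc-analytic semialgebraic. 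The main obstacle, and the reason this is stated as a conjecture rather than a theorem, is the extension-across-a-normal-crossings-divisor step on $\widehat E$: unlike in Theorem~\ref{thm:main}, where one only needed to \emph{build} a function with prescribed zero locus and therefore had complete freedom in its values away from $X$, here the values of $f$ on $\Reg_k(X)$ rigidly determine what the extension must be near $Z$, and controlling the behaviour of a merely arc-analytic (not analytic) semialgebraic function as it approaches the exceptional divisor — in particular ruling out unbounded or otherwise non-extendable behaviour along arcs hitting $Z$ — is delicate and does not follow formally from the tools assembled so far.
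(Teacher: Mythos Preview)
The statement is a \emph{conjecture} in the paper; there is no proof to compare your attempt against. Propositions~\ref{prop:ext} and~\ref{prop:ext2} in fact carry out precisely the lift--extend--push-down strategy you outline and show exactly how far it goes: one obtains an arc-analytic semialgebraic extension of $fh$ (for a suitable $h$ with $\dim h^{-1}(0)<\dim X$), or of some power $f^d$ under restrictive hypotheses, but not of $f$ itself.

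Your sketch has a genuine gap, and you have mislocated it. The composition $f\circ\pi$ is already defined and arc-analytic on all of $E$, because $\pi(E)=\overline{\Reg_k(X)}\subset X$ and composition of an arc-analytic map with a Nash map is arc-analytic; there is no ``extension across $E\cap D$'' to perform. After the Bierstone--Milman blow-ups of \cite{BM1}, the pullback $f\circ\pi\circ\sigma$ is genuinely Nash on $\widehat E$ and extends to some $g\in\Na(\R^N)$ by Efroymson without difficulty --- this is exactly what Proposition~\ref{prop:ext} does. The real obstruction is the \emph{push-down}, which you treat as routine. In the proof of Theorem~\ref{thm:main} and in Proposition~\ref{prop:ext} the descent is made arc-analytic only by multiplying by a factor $h$ vanishing on the centre $Z$: along an arc $\gamma$ one then has $F\circ\gamma=(g\circ\widehat\gamma)\cdot(h\circ\gamma)$, and the factor $h\circ\gamma$ absorbs whatever value $g\circ\widehat\gamma$ takes over $Z$. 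Without that factor the assignment $x\mapsto g\bigl((\pi\circ\sigma)^{-1}(x)\bigr)$ is not even a well-defined function on $Z$: two analytic arcs in $\R^n$ through a point $x\in Z$ lift to arcs meeting the exceptional fibre at different points, where the Nash extension $g$ --- completely uncontrolled off $\widehat E$ --- will in general take different values. Arranging $g$ to be constant along the fibres of $\pi\circ\sigma$ over $Z$, or otherwise making the descent single-valued and arc-analytic without multiplying by something in $\I(Z)$, is the unresolved step, and your outline does not address it.

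A secondary issue: the reduction to the $\AR$-irreducible case via functions $w_i$ equal to $1$ on $X_i$ and $0$ on $\bigcup_{j\neq i}X_j$ is impossible whenever two components meet, since $w_i$ would then have to take both values on $X_i\cap X_j$.
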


\begin{remark}
\label{rem:ext}
The extension question has recently been settled for continuous rational functions. In \cite{KN}, the authors showed that the analogue of Conjecture~\ref{conj:ext} holds for the so-called hereditarily rational functions but, in general, fails for continuous rational functions.
\end{remark}

We finish the paper with some initial results on the extension of arc-analytic functions, which give a partial justification for the conjecture. Let $X\subset\R^n$ be $\AR$-closed, and let $f\in\A(X)$. We denote by $\Sing(f)$ the locus of points $x\in \Reg_k(X)$ such that $f$ is not analytic at $x$. Recall that $\Sing(f)$ is semialgebraic, and $\dim\Sing(f)\leq\dim{X}-2$ (see, e.g., \cite{BM1}, and cf. \cite[Thm.\,5.2]{Ku}).

\begin{proposition}
\label{prop:ext}
Let $X$ be an $\AR$-closed set in $\R^n$, and let $f\in\A(X)$. There exists \,$h\in\A(\R^n)$, with $\dim h^{-1}(0)<\dim{X}$, such that $fh$ can be extended to an arc-analytic semialgebraic function $F\in\A(\R^n)$.
\end{proposition}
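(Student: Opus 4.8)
The plan is to refine the construction used in the proof of Theorem~\ref{thm:main}: lift $f$ to a connected component $E$ of a desingularization of $\overline X^{\Zar}$, make it \emph{analytic} there by one further resolution, extend it by Efroymson's theorem, and push it back down to $\R^n$, letting the factor $h$ absorb the excess zeros created along the way. Two preliminary reductions come first. It is enough to produce $h_0,F_0\in\A(\R^n)$ with $F_0|_X=(fh_0)|_X$ and $\dim\big(h_0^{-1}(0)\cap X\big)<\dim X$: choosing $\varepsilon\in\A(\R^n)$ with $\varepsilon^{-1}(0)=X$ from Theorem~\ref{thm:main} and setting $h\coloneqq h_0^2+\varepsilon^2$, $F\coloneqq F_0h_0$, one gets $h^{-1}(0)=h_0^{-1}(0)\cap X$ and $F|_X=(fh)|_X$. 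And for $h_0,F_0$ one may assume $X$ is $\AR$-irreducible: if $X=X_1\cup\dots\cup X_r$ are the $\AR$-components of $X$, if $e_i\in\A(\R^n)$ satisfies $e_i^{-1}(0)=\bigcup_{j\neq i}X_j$ (again by Theorem~\ref{thm:main}), and if $h_i,F_i$ work for $X_i$, then $h_0\coloneqq\sum_i h_ie_i$ and $F_0\coloneqq\sum_i F_ie_i$ work for $X$, because $e_i|_{X_j}=0$ for $j\neq i$ forces $F_0|_{X_j}=(fh_0)|_{X_j}$ while $h_0^{-1}(0)\cap X_j$ is a proper $\AR$-closed subset of $X_j$, hence of dimension $<\dim X_j$. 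Since finite $X$ is trivial, I would then assume $X$ is $\AR$-irreducible of dimension $k\coloneqq\dim X\geq1$.

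For the construction itself I would keep the set-up of Section~\ref{sec:proof}: $Y\coloneqq\Sing\overline X^{\Zar}$, an embedded desingularization $\pi:\widetilde R\to\R^n$ of $\overline X^{\Zar}$ (with $\widetilde R$ a smooth Nash subset of some $\R^N$), the connected component $E$ of the strict transform of $\overline X^{\Zar}$ with $\overline{\Reg_k(X)}=\pi(E)$, and $D\coloneqq\pi^{-1}(Y)$. The crucial point is that $\pi(E)\subset X$, so that $g\coloneqq f\circ\pi|_E$ is an arc-analytic semialgebraic function on the \emph{smooth} Nash manifold $E$, to which the resolution theorem for arc-analytic semialgebraic functions (\cite{BM1}; cf.\ \cite[Sect.\,5]{Ku}) applies: there is a finite composition $\rho:\widehat E\to E$ of blowings-up with smooth Nash centres, all lying over $\Sing(g)$, such that $\widehat g\coloneqq g\circ\rho$ is analytic, hence Nash, on $\widehat E$. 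Since $\widehat E$ is a closed Nash submanifold of an affine space, Efroymson's extension theorem provides a Nash function $\widehat G$ on that space with $\widehat G|_{\widehat E}=\widehat g$. Finally I would extend $\rho$ to a composition $\sigma:\widehat R\to\widetilde R$ of blowings-up of $\widetilde R$ with the same centres, whose non-isomorphism locus $C\subset\widetilde R$ is contained in $\Sing(g)$ and so has dimension $\leq k-2$, and pick a Nash function $h_1$ on $\widetilde R$ with $h_1^{-1}(0)=C\cup D$.

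Now comes the descent, carried out exactly as $\widehat f$ is pushed down in the proof of Theorem~\ref{thm:main}. Putting $\widetilde G\coloneqq(\widehat G\circ\sigma^{-1})\cdot h_1$ off $C$ and $\widetilde G\coloneqq0$ on $C$, the factor $h_1$ (which vanishes on $C$) makes $\widetilde G$ arc-analytic on $\widetilde R$, while $h_1|_D=0$ gives both $\widetilde G|_D=0$ and $\widetilde G|_E=g\cdot(h_1|_E)$. Then putting $F_0\coloneqq\widetilde G\circ\pi^{-1}$ off $Y$ and $F_0\coloneqq0$ on $Y$, and letting $h_0$ be the analogous push-down of $h_1$ (equal to $h_1\circ\pi^{-1}$ off $Y$, to $0$ on $Y$), the vanishing of $\widetilde G$ and $h_1$ on $D=\pi^{-1}(Y)$ makes $F_0$ and $h_0$ arc-analytic semialgebraic on $\R^n$, and a direct check gives $F_0|_{\pi(E)}=(fh_0)|_{\pi(E)}$. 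Since $\Reg_k(X)\subset\pi(E)$ is $\AR$-dense in the irreducible set $X$, this forces $F_0|_X=fh_0$; and $h_0^{-1}(0)=Y\cup\pi(C\setminus D)$ has dimension $\leq k-1<\dim X$, so $h_0$ and $F_0$ are exactly what the first paragraph calls for.

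The hard part, I expect, is the resolution step: in Section~\ref{sec:proof} it suffices to desingularize the algebraic set $\overline X^{\Zar}$, whereas here one must genuinely resolve the arc-analytic function $g$ itself, and — crucially for the dimension estimate — must do so by blowings-up whose centres project into a set of dimension $\leq k-2$ in $E$, and hence in $\R^n$; this is what ultimately keeps $h_0^{-1}(0)$ below $\dim X$. The second delicate point is the book-keeping in the two descents: the Efroymson extension $\widehat G$ is controlled only on $\widehat E$, so the multiplier $h_1$ must absorb simultaneously the exceptional locus $C$ of $\sigma$ \emph{and} the whole non-isomorphism locus $D$ of $\pi$, while its push-down $h_0$ still has zero set of dimension $<\dim X$ — precisely the content of the final estimate $\dim\big(Y\cup\pi(C\setminus D)\big)\le k-1$.
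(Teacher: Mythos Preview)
Your argument is correct and follows the same core strategy as the paper: desingularize $\overline X^{\Zar}$, resolve $f\circ\pi$ to a Nash function via \cite{BM1}, extend by Efroymson, and descend with a multiplier absorbing the exceptional locus. The main difference is in how the multiplier $h$ is produced. The paper works directly downstairs: it lets $Z\subset\R^n$ be the full non-isomorphism locus of $\pi\circ\sigma$, invokes Theorem~\ref{thm:main} once to obtain $h\in\A(\R^n)$ with $h^{-1}(0)=Z\cup\overline{\Reg_{<k}(X)}^{\AR}$, and defines $F$ by a single descent as $(g\circ\sigma^{-1}\circ\pi^{-1})\cdot h$ off $Z$ and $0$ on $Z$; the extra term $\overline{\Reg_{<k}(X)}^{\AR}$ in $h^{-1}(0)$ handles points of $X$ outside $\pi(E)$ directly, so no irreducibility assumption or density argument is needed. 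You instead pick a Nash $h_1$ upstairs on $\widetilde R$, push it down in two stages to get $h_0$, and then use $\AR$-density of $\Reg_k(X)$ in the irreducible $X$ to pass from $\pi(E)$ to $X$; your preliminary reductions (to irreducible $X$, and to $\dim(h_0^{-1}(0)\cap X)<\dim X$) are scaffolding that the paper's direct appeal to Theorem~\ref{thm:main} renders unnecessary. Both routes are valid; the paper's is more economical. One small point: the assertion that the centres of $\rho$ may be chosen over $\Sing(g)$ is \cite{KuPa} rather than \cite{BM1}, but in fact you only need $\dim C\le k-1$, which already holds since the centres are proper smooth Nash subsets of the $k$-manifold $E$.
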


\begin{proof}
As in the proof of Theorem~\ref{thm:main}, let $\pi:\widetilde{R}\to\R^n$ be an embedded desingularization of $\overline{X}^{\Zar}$\!, and let $E$ be the union of connected components of the strict transform $\widetilde{X}$ of $\overline{X}^{\Zar}$ such that $\pi(E)=\overline{\Reg_k(X)}$, where $k=\dim X$. By \cite[Thm.\,1.1]{BM1}, there exists a finite composition of blowings-up $\sigma:\widehat{R}\to\widetilde{R}$ (with smooth algebraic centres) which converts the arc-analytic semialgebraic function $f\circ\pi$ into a Nash function $f\circ\pi\circ\sigma$. In particular, $f\circ \pi\circ\sigma|_{\widehat{E}}$ is Nash, where $\widehat{E}$ is the strict transform of $E$ by $\sigma$. Let $Z$ be the centre of $\pi \circ \sigma$ (i.e., the subset of $\R^n$ outside of whose preimage $\pi\circ\sigma$ is an isomorphism). Notice that $Z$ is the union of the singular locus of $\overline{X}^\Zar$ and the images of all the centres of blowings-up involved in $\sigma$. Then, $Z$ is $\AR$-closed and of dimension less than $k$. 

By Theorem~\ref{thm:main}, there exists $h\in\A(\R^n)$ such that $Z\cup\overline{\Reg_{<k}(X)}^\AR\!= h^{-1}(0)$, where $\Reg_{<k}(X)$ denotes the locus of smooth points of $X$ of dimensions less than $k$. It follows that $\dim h^{-1}(0)<k$.

We can extend $f\circ\pi\circ\sigma|_{\widehat{E}}$, by \cite[Cor.\,8.9.13]{BCR}, to a Nash function $g:\widehat{R}\to\R$. Now, define $F:\R^n\to\R$ as
\[
F(x) \coloneqq \begin{cases} \left((g\circ\sigma^{-1}\circ\pi^{-1}) \cdot h \right)(x)\,,  &x\not\in Z\\ 0\,, &x\in Z\,.\end{cases}
\]
As in the proof of Theorem~\ref{thm:main}, one easily verifies that $F$ is arc-analytic on $\R^n$. By construction, $F$ is semialgebraic and satisfies $F(x)=(fh)(x)$ for all $x\in X$.
\qed
\end{proof}
\medskip

\begin{proposition}
\label{prop:ext2}
Let $X \subseteq \R^n$ be an algebraic set of pure dimension $k$, with an isolated singularity at $p\in\R^n$. Let $f\in\A(X)$ be analytic except perhaps at $p$, and suppose that $f(p)=0$. Then, there exists \,$d\geq1$ such that $f^d$ can be extended to an arc-analytic semialgebraic function $F\in\A(\R^n)$.
\end{proposition}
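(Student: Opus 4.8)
The plan is to follow the strategy of Proposition~\ref{prop:ext}, exploiting the fact that an isolated singularity forces the ``centre'' of the auxiliary resolution to collapse to the single point $p$. The key preliminary observation is a variant of (the proof of) Proposition~\ref{prop:ext}. Let $\pi:\widetilde{R}\to\R^n$ be an embedded desingularization of $X=\overline{X}^{\Zar}$, let $E$ be the union of those components of the strict transform of $X$ with $\pi(E)=\overline{\Reg_k(X)}=X$, and---by \cite[Thm.\,1.1]{BM1}---let $\sigma:\widehat{R}\to\widetilde{R}$ be a finite composition of blowings-up with smooth algebraic centres converting $f\circ\pi$ into a Nash function $f\circ\pi\circ\sigma$, with $\widehat{R}$ realized as a closed Nash submanifold of $\R^N$. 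Since $\Sing{X}=\{p\}$, the map $\pi$ is an isomorphism over $\R^n\setminus\{p\}$; and since $f$ is analytic---hence, being semialgebraic, Nash---on $X\setminus\{p\}$, the function $f\circ\pi$ is already Nash away from $\pi^{-1}(p)$, so (exactly as in the proof of Proposition~\ref{prop:ext}, where $\dim Z<\dim X$ is invoked) $\sigma$ may be taken to be an isomorphism outside $\pi^{-1}(p)$. Thus $\pi\circ\sigma$ is an isomorphism over $\R^n\setminus\{p\}$, and the centre $Z$ of Proposition~\ref{prop:ext} equals $\{p\}$. Extend $f\circ\pi\circ\sigma|_{\widehat{E}}$ to $\widetilde{g}\in\Na(\widehat{R})$ by \cite[Cor.\,8.9.13]{BCR}; the push-down $\widetilde{g}\circ(\pi\circ\sigma)^{-1}$ is then Nash on $\R^n\setminus\{p\}$, is bounded near $p$ (as $\pi\circ\sigma$ is proper), and coincides with $f$ on $X\setminus\{p\}$. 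Lifting arcs by the composition of blowings-up $\pi\circ\sigma$, one checks as in Proposition~\ref{prop:ext} that for \emph{any} $h\in\A(\R^n)$ with $h(p)=0$ the function
\[
 F_h(x)\coloneqq\begin{cases}\big((\widetilde{g}\circ(\pi\circ\sigma)^{-1})\cdot h\big)(x), & x\neq p,\\ 0, & x=p,\end{cases}
\]
lies in $\A(\R^n)$ and restricts to $fh$ on $X$: away from $p$ both factors are arc-analytic, while at $p$ the only possible failure of arc-analyticity is killed by the vanishing of $h$. The same reasoning with $f$ replaced by any $\psi\in\A(X)$ that is analytic on $X\setminus\{p\}$ with $\psi(p)=0$ shows that $\psi h$ extends to an element of $\A(\R^n)$ for every $h\in\A(\R^n)$ vanishing at $p$.

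Next I would invoke a Łojasiewicz estimate. Since $f$ is continuous and semialgebraic with $f(p)=0$, the Łojasiewicz inequality on a compact semialgebraic neighbourhood of $p$ in $X$ (with $x\mapsto\|x-p\|$ as the function having the smaller zero set) gives a positive integer $N$ and $C>0$ with $|f(x)|\leq C\,\|x-p\|^{1/N}$ for $x\in X$ near $p$. Fix an integer $d>2N$ and set
\[
 g_0(x)\coloneqq\begin{cases} f(x)^{d}/\|x-p\|^{2}, & x\in X\setminus\{p\},\\ 0, & x=p.\end{cases}
\]
Then $g_0$ is analytic on $X\setminus\{p\}$, $g_0(p)=0$, and $g_0\in\A(X)$: analyticity off $p$ is immediate; continuity at $p$ follows from $|g_0(x)|\leq C^{d}\|x-p\|^{d/N-2}\to0$; and for an analytic arc $\gamma:(-1,1)\to X$ with $\gamma(t_0)=p$, $\gamma\not\equiv p$, setting $r=\mathrm{ord}_{t_0}(\gamma-p)$ we have $\mathrm{ord}_{t_0}(f^{d}\circ\gamma)\geq d\,r/N$ from the estimate and $\mathrm{ord}_{t_0}(\|\gamma-p\|^{2})=2r$, so $\mathrm{ord}_{t_0}(g_0\circ\gamma)\geq r(d/N-2)>0$ and $g_0\circ\gamma$ is analytic.

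Finally, I would apply the first step to $\psi\coloneqq g_0$ and $h\coloneqq\|x-p\|^{2}\in\A(\R^n)$ (which vanishes at $p$): the product $g_0\cdot\|x-p\|^{2}$ extends to some $F\in\A(\R^n)$. As $g_0\cdot\|x-p\|^{2}=f^{d}$ on all of $X$ (at $p$ both sides are $0$), $F$ is the desired extension of $f^{d}$, and $d>2N$ works. The step I expect to be the main obstacle is the first---concretely, justifying that the resolution of $f\circ\pi$ from \cite{BM1} can be chosen to be an isomorphism outside $\pi^{-1}(p)$, so that $Z$ genuinely reduces to $\{p\}$ and the auxiliary factor $h$ is at our disposal (here, $\|x-p\|^{2}$); this mirrors what is already used in the proof of Proposition~\ref{prop:ext}. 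The order estimate in the middle step is routine once one notes that $N$ is a single Łojasiewicz exponent valid for all arcs simultaneously.
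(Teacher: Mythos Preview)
Your approach is essentially the paper's: reduce to the situation where the total centre $Z$ of $\pi\circ\sigma$ is $\{p\}$, factor $f^d=f_1\cdot h$ with $h(x)=\|x-p\|^2$ and $f_1\in\A(X)$, push down a Nash extension of $f_1\circ\pi\circ\sigma|_{\widehat{E}}$, and multiply by $h$. Your direct {\L}ojasiewicz construction of $g_0=f^d/\|x-p\|^2\in\A(X)$ simply unpacks the paper's citation of \cite[Prop.\,6.5]{Ku} (which is itself proved via a {\L}ojasiewicz inequality), so that part is a harmless variation; your order computation along arcs is correct.

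The gap you single out is real, but your proposed justification does not work. In Proposition~\ref{prop:ext} one only needs $\dim Z<\dim X$, and that follows merely because the blow-up centres are smooth and nowhere dense; nothing there says that $\sigma$ can be taken to be an isomorphism outside the non-analyticity locus of $f\circ\pi$. The result from \cite{BM1} alone does not control where the centres lie. The paper fills exactly this gap by invoking \cite[Thm.\,1.3]{KuPa} (Kurdyka--Parusi\'nski), which guarantees that the centres of $\sigma$ may be chosen inside $\Sing(f\circ\pi)$; since $f$ is analytic on $X\setminus\{p\}$, this locus sits over $p$, and one obtains $Z=\{p\}$. With that citation inserted, your argument goes through verbatim. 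Without it, your definition of $F_h$ via $(\pi\circ\sigma)^{-1}$ is not well posed on $\R^n\setminus\{p\}$, and choosing $h=\|x-p\|^2$ would fail to kill the singularities over $Z\setminus\{p\}$.
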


\begin{proof}
Let $\pi:\widetilde{R}\to\R^n$, $E$, $\sigma:\widehat{R}\to\widetilde{R}$, $Z$, and $\widehat{E}$ be as in the proof of Proposition~\ref{prop:ext}. By \cite[Thm.\,1.3]{KuPa}, the (smooth Nash) centres of the blowings-up in $\sigma$ can be chosen such that $\sigma$ is an isomorphism outside the preimage of $\Sing(f\circ\pi)$. Consequently, one can assume that $\pi\circ\sigma$ is an isomorphism outside the preimage of $p$ (i.e., $Z=\{p\}$). By assumptions on $X$, it also follows that $\widetilde{E}$ (resp. \!$\widehat{E}$) is the entire strict transform of $X$ by $\pi$ (resp. \!{by} $\pi\circ\sigma$).

Let $h(x):=\|x-p\|^2$ be the square of the Euclidean distance from the point $p$. Then, by \cite[Prop.\,6.5]{Ku}, there exists $d\geq1$ and a function $f_1\in\A(X)$ such that $f^d=f_1\!\cdot\!h$.
Since $h$ is analytic on $\R^n$ and non-vanishing outside of $p$, it follows that $\Sing(f_1)\subset\Sing(f)$. One can thus, without loss of generality, assume that $\sigma$ converts the arc-analytic semialgebraic function $f_1\circ\pi$ into a Nash function $f_1\circ\pi\circ\sigma$.
As in the proof of Proposition~\ref{prop:ext}, let $g\in\Na(\widehat{R})$ be an extension of the function $f_1\circ\pi\circ\sigma|_{\widehat{E}}$. Define
\[
F(x) \coloneqq \begin{cases} \left((g\circ\sigma^{-1}\circ\pi^{-1}) \cdot h \right)(x)\,,  &x\neq p\\ 0\,, &x=p\,.\end{cases}
\]
As in the proof of Theorem~\ref{thm:main}, one easily verifies that $F$ is arc-analytic on $\R^n$. By construction, $F$ is semialgebraic and satisfies $F(x)=f^d(x)$ for all $x\in X$.
\qed
\end{proof}


\end{document}